\numberwithin{equation}{section}
\newcolumntype{Y}{>{\centering\arraybackslash}X}
\newtheorem{propn}{Proposition}[section]
\newtheorem{thm}[propn]{Theorem}
\newtheorem{lem}[propn]{Lemma}
\newtheorem{proposition}[propn]{Proposition}
\newtheorem*{thm*}{Theorem}
\theoremstyle{definition}
\newtheorem{defn}[propn]{Definition}
\newtheorem{rem}{Remark}[section]
\newtheorem*{question}{Question}
 \newcommand{\D}{\mathbb{D}}
\newcommand{\clh}{\mathcal{H}}
\begin{document}
\today

\title[A note on the column-row property]{A note on the column-row property}

\author[Ray]{Samya Kumar Ray}
\address{Stat-Math Unit, Indian Statistical Institute Kolkata, Kolkata, 700108, India}
\email{samyaray7777@gmail.com}

\author[Sarkar]{Srijan Sarkar}
\address{Department of Mathematics, Indian Institute of Science, Bangalore, 560012, India}
\email{srijans@iisc.ac.in,
srijansarkar@gmail.com}

	\subjclass[2010]{47L25: Operator spaces (= matricially normed spaces)}

\keywords{Column-row property, Operator spaces}

\begin{abstract}
In this article, we study the following question asked by Michael Hartz in a recent paper \cite{Hartz}: \textit{which operator spaces satisfy the column-row property?} We provide a complete classification of the column-row property for non-commutative $L_{p}$-spaces over semifinite von Neumann algebras. We study other relevant properties of operator spaces that are related to the column-row property and discuss their existence and non-existence for various natural examples of operator spaces.
\end{abstract}
\maketitle
\section{Introduction}
In \cite{Trent}, while studying the Corona problem for Dirichlet spaces on the unit disc $\D$, the author discovered an important property involving the corresponding multiplier algebra, which in recent times, is known as the \textit{column-row} property. The column-row property has emerged as an important tool in extending classical results on Hardy spaces to complete Nevanlinna-Pick (cnp) spaces.  In a remarkable recent work \cite{Hartz}, Hartz showed that every normalized cnp space has the column-row property with constant $1$. The notion of column-row property has led to a plethora of important results for cnp spaces, to name a few: (a) factorization for weak-product spaces; (b) interpolating sequences; (c) Corona problem etc.  (see \cite{AHMR2,AHMR, CH, Hartz,Trent}). Motivated by the inherent operator space structure of multiplier algebras and the immense application of this property,  Hartz asked the following question in \cite{Hartz}: 
\begin{question}
\textit{Which operator spaces satisfy the column-row property?}
\end{question}
Our aim in this article is to initiate a study for this question in the general setting of operator spaces, by looking at several examples. For the theory of operator spaces and related important results we refer to the excellent monographs \cite{ER00,paulsenbook, P03}. Let us begin with the description of column-row property by considering a \emph{concrete} operator space $E \subseteq B(\clh)$. 
For a sequence $\mathbf e:=(e_1,\dots,e_n)$ in $E,$ we define the column operator 
$C_{\mathbf e}
:\mathcal H\to\mathcal H^n$ by
$C_{\mathbf e}(\zeta):=[e_1(\zeta),\dots,e_n(\zeta)],\ \zeta\in\mathcal H.$
Similarly, let us define the corresponding row operator $R_{\mathbf e}:\mathcal H^n\to\mathcal H $ by 
$
R_{\mathbf{e}}([
\zeta_1,\dots,\zeta_n]):=\sum_{j=1}^ne_j(\zeta_j).
$
\begin{defn}\label{CRPde}
	A concrete operator space $E\subseteq B(\mathcal H)$ is said to have the \textit{column-row property} (in short CRP) if there exists a constant $C>0$ such that for any finite sequence $\mathbf e$ in $E$ with $\|C_{\mathbf e}\|_{\mathcal{H}\to\mathcal{H}^n}\leq 1$ we have \[\|R_{\mathbf e}\|_{\mathcal{H}^n\to\mathcal{H}}\leq C.\]
\end{defn}
In this article we obtain the following characterization for CRP for non-commutative $L_p$-spaces.
\begin{thm}\label{vonth}
Let $\mathcal M$ be a semifinite von Neumann algebra. Let $1\leq p\neq 2\leq\infty.$ Then $L_p(\mathcal M)$ has CRP if and only if $\mathcal M$ is subhomogeneous. 
\end{thm}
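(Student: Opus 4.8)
The plan is to recast CRP for $L_p(\mathcal M)$ as an inequality between a ``column'' and a ``row'' square function, and then to analyze that inequality via the type decomposition of $\mathcal M$. Equip $L_p(\mathcal M)$ with its natural operator space structure, for which $M_m(L_p(\mathcal M)) = L_p(M_m(\mathcal M))$; expanding the first column and first row of this identification, one computes that for a finite sequence $\mathbf x = (x_1,\dots,x_m)$ in $L_p(\mathcal M)$,
\[
\|C_{\mathbf x}\|^2 = \Big\|\sum_{j=1}^m x_j^* x_j\Big\|_{L_{p/2}(\mathcal M)}, \qquad \|R_{\mathbf x}\|^2 = \Big\|\sum_{j=1}^m x_j x_j^*\Big\|_{L_{p/2}(\mathcal M)},
\]
where $L_{p/2}(\mathcal M) = \mathcal M$ when $p=\infty$ and $L_{p/2}(\mathcal M)$ is merely a quasi-Banach space when $p<2$. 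Since replacing $\mathbf x$ by $(x_1^*,\dots,x_m^*)$ interchanges these two quantities, $L_p(\mathcal M)$ has CRP exactly when there is a constant $K$ with $\|\sum_j x_j x_j^*\|_{p/2} \leq K\,\|\sum_j x_j^* x_j\|_{p/2}$ for all finite sequences, in which case the reverse inequality holds with the same $K$.

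For the ``if'' direction, a (necessarily type I) subhomogeneous von Neumann algebra is $*$-isomorphic to a \emph{finite} direct sum $\mathcal M \cong \bigoplus_{n=1}^N M_n(A_n)$ with each $A_n$ abelian, and accordingly $L_p(\mathcal M) = \bigoplus_{n=1}^N L_p(M_n(A_n))$ as an $\ell_p$-direct sum. Writing $A_n = L^\infty(\Omega_n)$ and $r = p/2$, we have $L_r(M_n(A_n)) = L_r(\Omega_n; S^r_n)$, and within this block the squared column, resp. row, norm of a finite sequence is the $L_r(\Omega_n)$-norm of $\omega\mapsto \|a(\omega)\|_{S^r_n}$, resp. $\omega\mapsto\|b(\omega)\|_{S^r_n}$, where $a(\omega) = \sum_j x_j(\omega)^* x_j(\omega)$ and $b(\omega) = \sum_j x_j(\omega)\,x_j(\omega)^*$. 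For each $\omega$ the positive matrices $a(\omega),b(\omega)\in M_n$ have equal trace, and the elementary estimates $n^{-(1-1/r)_+}\mathrm{Tr}(c) \leq \|c\|_{S^r_n} \leq n^{(1/r-1)_+}\mathrm{Tr}(c)$ (valid for $c\geq 0$, all $r\in(0,\infty]$) yield $\|b(\omega)\|_{S^r_n} \leq n^{|1-1/r|}\|a(\omega)\|_{S^r_n} = n^{|1-2/p|}\|a(\omega)\|_{S^r_n} \leq N^{|1-2/p|}\|a(\omega)\|_{S^r_n}$. Taking $L_r(\Omega_n)$-norms and then combining the at most $N$ blocks shows $L_p(\mathcal M)$ has CRP with constant $N^{|1/2-1/p|}$.

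For the ``only if'' direction, suppose $\mathcal M$ is semifinite and not subhomogeneous; then its type decomposition contains a type $\mathrm{II}$ summand, a type $\mathrm I_\infty$ summand, or type $\mathrm I_n$ summands of unbounded degree, and in each case it follows that for every $n$ there exist mutually orthogonal, pairwise (Murray--von Neumann) equivalent, nonzero projections $p_1,\dots,p_n\in\mathcal M$ with $\tau(p_1)<\infty$. Choosing partial isometries $w_j$ with $w_j^* w_j = p_1$ and $w_j w_j^* = p_j$ and setting $x_j := w_j^*$, we get $\sum_j x_j^* x_j = p_1+\dots+p_n$, a projection of trace $n\tau(p_1)$, while $\sum_j x_j x_j^* = n\,p_1$; hence $\|R_{\mathbf x}\|^2/\|C_{\mathbf x}\|^2 = n^{1-2/p}$, and for the adjoint sequence $(x_1^*,\dots,x_n^*)$ this ratio equals $n^{2/p-1}$. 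Since $p\neq 2$, $|1-2/p|>0$ (it is $1$ when $p=\infty$), so the larger of these ratios, namely $n^{|1-2/p|}$, is unbounded as $n\to\infty$, and no constant $C$ can satisfy Definition~\ref{CRPde}; thus $L_p(\mathcal M)$ fails CRP.

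The identification of the column/row norms and the $\ell_p$-direct sum decomposition of $L_p(\mathcal M)$ are routine. The step carrying the actual content is the structure-theoretic input in the ``only if'' direction --- extracting, for arbitrarily large $n$, a matrix-unit-type configuration inside $L_p(\mathcal M)$ --- and one must also check that the Schatten--trace comparison used in the ``if'' direction is applied uniformly down to $p=1$, where $S^{p/2}_n$ is only a quasi-normed space.
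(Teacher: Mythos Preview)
Your argument rests on the identification $M_m(L_p(\mathcal M)) = L_p(M_m(\mathcal M))$, and this is false for every $p<\infty$. The quickest counterexample is $\mathcal M=\mathbb C$: any operator space structure on a one-dimensional space is the canonical one, so $M_m(\mathbb C)$ necessarily carries the operator norm, not the $S_p^m$-norm. The paper flags exactly this point in the Remark following formula~\eqref{ncformula}. As a consequence, your square-function formulas $\|C_{\mathbf x}\|^2=\big\|\sum_j x_j^*x_j\big\|_{p/2}$ and $\|R_{\mathbf x}\|^2=\big\|\sum_j x_jx_j^*\big\|_{p/2}$ do not compute the operator-space column and row norms that enter Definition~\ref{CRPde}; they compute the norms of $L_p(\mathcal M;\ell_2^c)$ and $L_p(\mathcal M;\ell_2^r)$, which are different objects. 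A concrete discrepancy: for the column $[E_{11},\dots,E_{1n}]^t$ in $M_{n,1}(S_2^n)$ your formula gives $\|I_n\|_{S_1^n}^{1/2}=n^{1/2}$, whereas the correct value, obtained in~\eqref{comput} via the $OH$ structure and Lemma~\ref{s2norm}, is $n^{1/4}$.

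Since both directions of your proof are built on these formulas, the argument collapses for all finite $p$. (At $p=\infty$ the identification \emph{does} hold, and your matrix-unit computation is essentially the standard one; this case is handled separately in the paper as Theorem~\ref{staral}.) The genuine difficulty for $p<\infty$ is precisely that there is no closed formula for $\|\cdot\|_{M_{n,1}(L_p)}$ or $\|\cdot\|_{M_{1,n}(L_p)}$. The paper therefore proceeds indirectly: it bounds the column norm from above by complex interpolation between $p=2$ (where the $OH$ description is available) and $p=\infty$ or $p=1$, and bounds the row norm from below by testing against a specific pair $a,b$ in formula~\eqref{ncformula}; the resulting gap gives Lemma~\ref{noncrp}, which combined with the completely isometric embedding $S_p^n\hookrightarrow L_p(\mathcal M)$ for non-subhomogeneous $\mathcal M$ yields the ``only if'' direction.
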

Moreover, we show that if $L_p(\mathcal M)$ has CRP with constant $1$ for some $p\in[1,\infty)\setminus\{2\}$, then $\mathcal M$ must be an abelian von Neumann algebra. In the case of $p=2$, it is well known that $L_2(\mathcal M)$ is an operator Hilbert space \cite[Page 139]{P03} and any operator Hilbert space is completely isometric to its opposite \cite[Exercise 7.1, Page 146]{P03} (see also [Footnote 8, Page 246]\cite{FI99}). Moreover, using the description of the matricial norms, it is a straightforward observation that any operator Hilbert space satisfies a stronger condition that is, the column-matrix property (CMP) with constant 1 (see Lemma \ref{s2norm}). We provide a short explanation of this fact along with some related results just after Remark \ref{ohmrp}.

Note that Hartz proved a stronger result by showing that cnp spaces satisfy the column-matrix property \cite[Corollary 3.6]{Hartz}. Motivated by these results, we study the column-matrix property (in short CMP) for operators spaces via some naturally occuring examples. We also introduce the completely bounded version of CRP and CMP. It was shown in \cite[Section 5.1]{Hartz} that normalized cnp spaces do not have completely bounded CRP with constant $1.$ However, it turns out that no non-trivial operator space can have completely bounded CRP. We also discuss CRP for many naturally occurring operator spaces including $C^*$-algebras.
 
Here, we would like to point out that the operator spaces we have mainly considered are self-adjoint. Thus, this situation is different from the study of the column-row property of multiplier algebras of rkHs, which are very much asymmetrical in nature. We thank the anonymous reviewer for this remark.  The main advantage that self-adjointness brings into the problem is the concept of subhomogeneity.  It allows us to form a line of thought where if subhomogeneity fails then the operator space contains certain matrix algebras, where we can study the column-row property by means of computation of norms. However, these computations are not straightforward and are obtained via non-trivial estimate of the norm of certain matrix-valued rows and columns. The main difficulty for establishing Theorem \ref{vonth} lies in the fact that the operator space structure of non-commutative $L_p$-spaces are difficult to work with. To overcome this difficulty, we first compute the norm of a column with certain entries coming from $S_2.$ Here we use the operator Hilbert space structure of $S_2$ and the description of the norm given by Pisier in \cite{P96}. Following this, we get an upper bound for the norm of the column by using the method of complex interpolation. Then by exhibiting lower bounds of the rows, we obtain the desired estimates for $p>2.$ The approach for the case of $p<2$ differs in the following manner: here we estimate norms of columns with entries from $S_1$. This is done using certain duality relationship between non-commutative vector valued $L_p$-spaces which was developed by Pisier in \cite{P98}.

Let us now briefly discuss the manner in which the rest of the article has been organised.  Section \ref{crp_op} contains the characterization of CRP for non-commutative $L_p$-spaces. In section \ref{natu}, we have studied CRP and CMP and their completely bounded versions for some natural examples of operator spaces. 
\section{Column-Row property for non-commutative $L_p$-spaces}\label{crp_op}
We begin with some preliminaries of operator space theory. 
For various well-known concepts related to the operator space theory we refer to \cite{ER00} and \cite{P03}. 

Let $E$ be an operator space equipped with a matricial norm structure $(M_n(E),\|.\|_{M_n(E)})_{n\geq 1}$. Given $x=[x_{ij}]_{i,j=1}^n\in M_n(E)$ we denote the transpose map by
$
t(x):=[x_{ji}]_{i,j=1}^n.$ The transpose maps taking columns to rows or rows to columns are also denoted by $t.$ Sometimes we also denote it by $t_n$ to specify the dimension of the spaces under consideration. The \textit{opposite} operator space, $E^{op}$ is defined to be the same space as $E$, but with the following matricial norm: $\|[x_{ij}]_{i,j=1}^n\|_{M_n(E^{op})}:=\|[x_{ji}]_{i,j=1}^n\|_{M_n(E)}$, for all $[x_{ij}]_{i,j=1}^n\in M_n(E^{op})$. For an operator space $E$ we denote the conjugate  operator space by $\overline{E}$. Any element $x\in E$ corresponds to an element $\overline{x}\in\overline{E}.$ We refer \cite[Section 2.9]{P03} for the notion of conjugate operator space. Recall that a linear map $u:E\to F$ between operator spaces is called \textit{completely bounded} if $\sup\limits_{n\geq 1}\|id_{M_n}\otimes u\|_{M_n(E)\to M_n(F)}<\infty,$ where $id_E:E\to E$ is the identity map for any vector space $E$. In this case, one denotes $\|u\|_{cb}:=\sup\limits_{n\geq 1}\|id_{M_n}\otimes u\|_{M_n(E)\to M_n(F)}.$ The map $u$ is called a complete isometry if $id_{M_n}\otimes u$ is an isometry for all $n\geq 1.$ The Hilbert space tensor product of two Hilbert spaces $\mathcal H$ and $\mathcal K$ is denoted by $\mathcal H\otimes_2\mathcal{K}.$ Let $A$ and $B$ be $C^*$-algebras with faithful representations $\pi_{\mathcal{H}}$ and $\pi_{\mathcal{K}}$ into $B(\mathcal H)$ and $B(\mathcal K)$ respectively. Note that by natural identification $\pi_{\mathcal H}(A)\otimes \pi_{\mathcal K}(B)$ and hence $A\otimes B$ can be identified as a subalgebra of $B(\mathcal{H}\otimes_2\mathcal K)$. The minimal tensor product of $A$ and $B$ is defined to be the completion of the algebraic tensor product $A\otimes B$ with norm borrowed from $B(\mathcal{H}\otimes_2\mathcal K)$ and is denoted by $A\otimes_{\text{min}}B$. It is well-known that the $C^*$-algebra $A\otimes_{\text{min}}B$ thus obtained is independent of the representations $\mathcal{\pi}_{\mathcal H}$ and $\mathcal{\pi}_{\mathcal K}.$ We denote by $I_n$ to be the identity matrix in $M_n.$ We need the following lemma.
\begin{lem}\label{tensor}
Let $E$ be an operator space. Then $E$ has CRP if and only if there exists a constant $C>0$ such that for all $n\geq 1$, $\|t_n\otimes id_E\|_{M_{n,1}\check{\otimes}E\to M_{1,n}\check{\otimes}E}\leq C,$ where $\check{\otimes}$ denotes the operator space injective tensor product.
\end{lem}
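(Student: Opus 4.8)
The statement to prove is Lemma \ref{tensor}: $E$ has CRP iff the transpose maps $t_n \colon M_{n,1}\check\otimes E \to M_{1,n}\check\otimes E$ are uniformly bounded. The natural strategy is to show that the two quantities appearing in the definitions coincide, by unwinding what the injective tensor norm of $M_{n,1}\check\otimes E$ computes. The key point is the standard identification: for a concrete operator space $E\subseteq B(\mathcal H)$, a column $\mathbf e=(e_1,\dots,e_n)$ of elements of $E$ can be viewed as an element of $M_{n,1}(E)$, and the injective tensor norm $\|\mathbf e\|_{M_{n,1}\check\otimes E}$ equals the norm of the column operator $C_{\mathbf e}\colon \mathcal H\to\mathcal H^n$. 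Indeed $M_{n,1}=C_n$ is the column operator Hilbert space, $C_n \check\otimes E$ is (completely isometrically) the space of columns with entries in $E$ equipped with the operator-space column norm, and this norm is computed precisely as $\|[e_i^*e_j]\|^{1/2}=\|C_{\mathbf e}\|$. Likewise $\|t_n(\mathbf e)\|_{M_{1,n}\check\otimes E}=\|R_{\mathbf e}\|$, since $M_{1,n}=R_n$ and the row norm is $\|\sum_j e_je_j^*\|^{1/2}=\|R_{\mathbf e}\|$. So the abstract condition, spelled out, is exactly: there is $C>0$ such that $\|R_{\mathbf e}\|\le C$ whenever $\|C_{\mathbf e}\|\le 1$, which is Definition \ref{CRPde}.

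First I would recall/establish the completely isometric identifications $M_{n,1}\check\otimes E = C_n(E)$ and $M_{1,n}\check\otimes E = R_n(E)$, where $C_n(E)$, $R_n(E)$ denote the spaces of length-$n$ columns (resp.\ rows) with the operator space structure induced from $M_n(E)$ by placing entries in the first column (resp.\ first row) and zeros elsewhere; this is a basic fact from operator space theory (see \cite{ER00,P03}), using that $C_n$ and $R_n$ are exact, or directly that the minimal/injective tensor norm on $M_{n,1}\otimes E$ agrees with the column norm. Second, for a concrete $E\subseteq B(\mathcal H)$ I would verify that for $\mathbf e\in C_n(E)$ one has $\|\mathbf e\|_{C_n(E)}=\|[e_i^* e_j]_{i,j}\|_{B(\mathcal H)}^{1/2}=\|C_{\mathbf e}\|_{\mathcal H\to\mathcal H^n}$ — the first equality is the definition of the column norm realised inside $B(\mathcal H^n)$, and the second is the elementary computation $\|C_{\mathbf e}\zeta\|^2=\sum_j\|e_j\zeta\|^2=\langle [e_i^*e_j]\zeta,\zeta\rangle$ after recognising $[e_i^*e_j]=C_{\mathbf e}^*C_{\mathbf e}$. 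Symmetrically $\|t_n(\mathbf e)\|_{R_n(E)}=\|R_{\mathbf e}\|$ using $R_{\mathbf e}R_{\mathbf e}^*=\sum_j e_je_j^*$. Third, with these two equalities in hand, the map $t_n\otimes id_E$ is exactly the map sending the column operator norm data to the row operator norm data, so $\|t_n\otimes id_E\|_{M_{n,1}\check\otimes E\to M_{1,n}\check\otimes E} = \sup\{\|R_{\mathbf e}\|: \|C_{\mathbf e}\|\le 1,\ \mathbf e\in E^n\}$, and taking the supremum over $n$ gives that uniform boundedness of these norms is equivalent to the existence of the single constant $C$ in Definition \ref{CRPde}.

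One subtlety worth addressing is that Definition \ref{CRPde} is phrased for a \emph{concrete} operator space $E\subseteq B(\mathcal H)$, whereas the injective tensor norm is intrinsic; so I would note that the column and row operator norms $\|C_{\mathbf e}\|$, $\|R_{\mathbf e}\|$ are in fact independent of the chosen completely isometric representation (they equal the intrinsic $C_n(E)$- and $R_n(E)$-norms), which simultaneously makes CRP a well-defined property of the abstract operator space and makes the equivalence clean. The main obstacle, such as it is, is not conceptual but bookkeeping: one must be careful that $M_{n,1}\check\otimes E$ really carries the column structure (not, say, the structure coming from $M_n\check\otimes E$ restricted to a corner, which would be the same here, but the check is needed) and that the injective tensor product is the right one to reproduce Definition \ref{CRPde} — this is where the hypothesis "$\check\otimes$ = operator space injective tensor product" is essential, since for $M_{n,1}$ (a finite-dimensional, hence exact and even injective, operator space) the injective and minimal tensor norms coincide, giving exactly the operator norm of $C_{\mathbf e}$ inside $B(\mathcal H)^n$. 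Once these identifications are pinned down the lemma follows immediately by comparing suprema.
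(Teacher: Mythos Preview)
Your proposal is correct and follows the same approach as the paper, which simply invokes the completely isometric identification $M_{m,n}(E) = M_{m,n}\check\otimes E$ from \cite[Corollary~8.1.3]{ER00} and says nothing further. You have filled in considerably more detail than the paper's one-line proof---in particular, verifying that the abstract column and row norms coincide with the concrete operator norms $\|C_{\mathbf e}\|$ and $\|R_{\mathbf e}\|$ of Definition~\ref{CRPde} and noting that these are independent of the representation---but the underlying idea is identical.
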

\begin{proof}
From \cite[Corollary 8.1.3]{ER00} we note that
$M_{m,n}(E) = M_{m,n}\check{{\otimes}}E $, where $\check{\otimes}$ denotes the operator space injective tensor product of operator spaces.
\end{proof}


If $A$ is a $C^*$-algebra then let us fix a faithful $*$-representation of $A$ on ${B}(\mathcal{H})$. The canonical operator space structure on $A$ is obtained by borrowing the matricial structure from ${B}(\mathcal{H})$, and we will be using this canonical structure in the sequel. A $C^*$-algebra $A$ is called $n$-subhomogeneous if all the irreducible representations of $A$ have dimensions at most $n.$ A $C^*$-algebra is called subhomogeneous if it is $n$-subhomogeneous for some $n\in\mathbb N.$ Given a von Neumann algebra $(\mathcal M,\tau)$ with normal faithful semifinite trace $\tau $, let $L_p(\mathcal M,\tau)$ be the corresponding non-commutative $L_p$-space for $0<p<\infty$. One denotes $L_\infty(\mathcal M)=\mathcal M.$ When $(\mathcal M,\tau)=(B(\mathcal H),Tr)$ with $Tr$ being the natural trace on $B(\mathcal H),$ the corresponding non-commutative $L_p$-spaces are called the Schatten-$p$ classes and are denoted by $S_p(\mathcal H)$ for $1\leq p<\infty.$ The space $S_\infty(\mathcal H)$ is the space of all compact operators on $\mathcal H.$ These spaces are denoted by $S_p$ and $S_p^n$ when $\mathcal H$ is $\ell_2$ or $\ell_2^n$ respectively for $1\leq p\leq \infty.$ We often identify ${B}(\ell_2^n)$ with $M_n$. Note that if $\mathcal M$ is a von Neumann algebra with a normal faithful semifinite trace $\tau$, $M_n(\mathcal M)$ is again a von Neumann algebra equipped with the canonical tensor trace $Tr\otimes\tau$. Indeed we have a canonical identification of $M_n(\mathcal M)$ with the von Neumann algebra $M_n\overline{\otimes}\mathcal M$, where $M_n\overline{\otimes}\mathcal M$ is the von Neumann algebra tensor product of $M_n$ and $\mathcal M.$ Then $Tr\otimes \tau(\sum_{i=1}^Na_i\otimes x_i):=\sum_{i=1}^NTr(a_i)\tau(x_i)$ extends to a normal faithful semifinite trace on $M_n\overline{\otimes}\mathcal M$, where $a_i\in M_n$ and $x_i\in\mathcal M$ are positive elements for all $1\leq i\leq N$ and $N\geq 1.$ Equivalently for any positive $x\in M_n(\mathcal M)$, we have $Tr\otimes\tau(x)=\sum_{i=1}^n\tau(x_{ii}).$ Equipped with the canonical operator space structures $(\mathcal M_{*}, \mathcal M)$ becomes an interpolation couple (see \cite[Section 2.7 and Chapter 7]{P03} and \cite{FI99}), where $\mathcal{M}_{*}$ is the predual of $\mathcal M.$ We identify the predual ${\mathcal M}_{*}$ with $L_1(\mathcal M)$ via the map $\phi:L_1(\mathcal M)\to {\mathcal M}_{*}$ as $\phi(y)(x):=\tau(xy)$ for all $y\in L_1(\mathcal M)$ and $x\in\mathcal M.$ Moreover, $L_1(\mathcal M)$ has a natural operator space structure induced by $\mathcal M_{*}$ (see \cite[Page 139]{P03}).  We have the following description of the operator space structure of $L_p(\mathcal M,\tau)$  from the convention established in \cite{P03}. For $x\in M_n(L_p(\mathcal M,\tau)),$ we have for all $1\leq p\leq \infty$
\begin{eqnarray}\label{ncformula}
\|x\|_{M_n(L_p(\mathcal M,\tau))}=\sup\{\|axb\|_{L_p(M_n(\mathcal M ))}:\|a\|_{S_{2p}^n}\leq 1 \ \|b\|_{S_{2p}^n}\leq 1\},
\end{eqnarray}
where $axb$ is the usual product of matrices. It follows from \cite[Lemma 1.7]{P98} that for a map $u:L_{p}(\mathcal M)\to L_p(\mathcal N)$ we have $\|u\|_{cb, L_p(\mathcal M)\to L_p(\mathcal N)}=\|id_{S_p^n}\otimes u\|_{L_p(M_n\overline{\otimes}\mathcal M)\to L_p(M_n\overline{\otimes}\mathcal N)}.$ Moreover, $u$ is a complete isometry iff $id_{S_p^n}\otimes u$ is an isometry for all $n\geq 1.$
We refer \cite{Tak79} for the general theory of von Neumann algebras along with the notions of trace and tensor products of von Neumann algebras. We refer \cite{PiX03, Terp81} for non-commutative $L_p$-spaces.
\begin{rem}Note that the norm on $M_n(L_p(\mathcal M))$ is very \textit{different} from $L_p(M_n\overline{\otimes}\mathcal M)$ as can be seen from our computations in the proof of Theorem \ref{noncrp}.
\end{rem} 

We will now introduce a class of operator spaces which is important for this article. Following \cite[Theorem 1.1]{P96} (see also \cite[Theorem 7.1]{P03}), we can give the following definition of an operator Hilbert space denoted by $OH(I)$.
\begin{defn}
For any index set $I$, there always exists an unique operator space $OH(I)$ (up to complete isometry) such that the following properties are satisfied:
\begin{itemize}
\item[(i)] $OH(I)$ is isometric to $\ell_2(I)$ as a Banach space;
\item [(ii)] the canonical identification between $\ell_2(I)$ and $\overline{\ell_2(I)^*}$ induces a complete isometry from $OH(I)$ to $\overline{OH(I)^*}$.
\end{itemize}
This unique operator space $OH(I)$ is known as the operator Hilbert space.
Moreover, if $\mathcal K$ is a Hilbert space and $(T_i)_{i\in I}$ is any orthonormal basis of $OH(I)$ then for any finitely supported family $(x_i)_{i\in I}$ in $B(\mathcal K)$ we have that \[\Big\|\sum_{i\in I}x_i\otimes T_i\Big\|_{\text{min}}=\Big\|\sum_{i\in I}x_i\otimes \overline{x_i}\Big\|_{\text{min}}^{\frac{1}{2}},\] where $\|.\|_{\text{min}}$ denotes the norm of minimal tensor product between $C^*$-algebras.
\end{defn}
Thus, for any $[x_{ij}]_{i,j=1}^n\in M_n(OH(I))$, we have 
\begin{equation}\label{ohsds}
\Big\|[x_{ij}]_{i,j=1}^n\Big\|_{M_n(OH(I))}=\Big\|\Big[ [\langle x_{ij}, x_{kl}\rangle]_{k,l=1}^n \Big]_{i,j=1}^n\Big\|^{\frac{1}{2}}_{M_{n^2}},
\end{equation}
(see for instance, \cite[Exercise 7.5]{P03} and \cite[Proposition 1]{FI99} for a proof). 
Since $L_2(\mathcal M)$ is completely isometrically isomorphic to an operator Hilbert space \cite[Page 139]{P03} (also see \cite[Page 125]{P03}), we can describe the matricial structure of $L_2(\mathcal M)$ by the same formula as \eqref{ohsds} for any $[x_{ij}]_{i,j=1}^n\in M_n(L_2(\mathcal M))$ with the inner product $\langle x,y\rangle:=\tau(xy^*)$ for all $x,y\in L_2(\mathcal M).$ It is well known that $\|id_{OH(I)}\|_{cb,OH(I)\to OH(I)^{op}}=1$ (see \cite{FI99}).  We state the following lemma without giving the straightforward proof.
\begin{lem}\label{s2norm}
Let $a_i\in OH(I),$ $1\leq i\leq n.$ Then $\|[a_1\dots a_n]^t\|_{M_{n,1}(OH(I))}=\Big(\sum_{i,j=1}^n|\langle a_i,a_j\rangle|^2\Big)^{\frac{1}{4}}.$
\end{lem}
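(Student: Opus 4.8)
The plan is to apply the operator Hilbert space norm formula \eqref{ohsds} to the column $[a_1 \dots a_n]^t$, viewed as an element of $M_{n,1}(OH(I))$.

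First I would set up the column as a matrix with one column and $n$ rows, i.e.\ as an element $x \in M_{n,1}(OH(I)) \subseteq M_n(OH(I))$ after padding with zeros (or, more cleanly, work directly with the rectangular version of formula \eqref{ohsds}, which holds in the same form since $M_{n,1}(OH(I)) = M_{n,1} \check{\otimes} OH(I)$ by Lemma \ref{tensor}). Writing $x_{i1} = a_i$ for $1 \le i \le n$, formula \eqref{ohsds} gives
\[
\big\|[a_1 \dots a_n]^t\big\|_{M_{n,1}(OH(I))} = \Big\| \big[ [\langle a_i, a_k \rangle]_{k=1}^n \big]_{i=1}^n \Big\|_{M_{n^2}}^{1/2},
\]
where the inner "matrix" collapses, in the $m=1$ case, to the Gram matrix $G := [\langle a_i, a_j \rangle]_{i,j=1}^n \in M_n$. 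So the statement reduces to showing $\|G\|_{M_n}^{1/2} = \big(\sum_{i,j=1}^n |\langle a_i,a_j\rangle|^2\big)^{1/4}$, i.e.\ $\|G\|_{M_n} = \big(\sum_{i,j}|G_{ij}|^2\big)^{1/2} = \|G\|_{S_2^n}$.

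The key observation making this work is that $G$ is a positive semidefinite matrix: for any vector $c = (c_1,\dots,c_n)$, $\langle Gc, c\rangle = \sum_{i,j} c_j \overline{c_i}\langle a_j, a_i\rangle = \big\| \sum_j c_j a_j \big\|^2 \ge 0$ (using that $\langle\cdot,\cdot\rangle$ is the $OH(I)$ inner product). For a positive semidefinite matrix $G$ with eigenvalues $\lambda_1,\dots,\lambda_n \ge 0$, one has $\|G\|_{M_n} = \max_k \lambda_k$ — wait, that is not equal to $\big(\sum \lambda_k^2\big)^{1/2}$ in general. Let me reconsider: the correct route is that the outer $M_{n^2}$ norm in \eqref{ohsds} is applied to a matrix that is \emph{itself} of the form (Gram-type) $[\langle x_{ij}, x_{kl}\rangle]$, which is positive semidefinite on $\C^{n^2}$ (or $\C^n$ here), but more importantly, tracking the convention in \cite{P03} carefully, the operator norm appearing there of this particular block matrix equals its Frobenius norm precisely because of the rank-one structure: in the $m=1$ case the block matrix $\big[[\langle a_i,a_k\rangle]_k\big]_i$, reshaped appropriately, is the matrix $G$ itself, and for the \emph{column} one should instead note $\|x\|_{M_{n,1}(OH)}^2 = \|x^* x\|$ where $x^*x$ here is computed via the inner product — giving $\|x\|^2 = \|[\langle a_i, a_j\rangle]\|_{?}$. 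The hard part, and the main obstacle, is pinning down exactly which norm on the Gram data formula \eqref{ohsds} yields for a one-column matrix and checking it against the claimed $\ell_2$-type expression; I expect the resolution is that \eqref{ohsds} for an $(n,1)$ matrix produces $\|[\langle a_i,a_j\rangle]_{i,j}\|_{S_2}^{1/2}$ directly because the "$M_{n^2}$" in the general formula becomes, after the appropriate index identification for a column, the Hilbert–Schmidt (Frobenius) norm of the $n\times n$ Gram matrix rather than its operator norm — equivalently, $\|x\|_{M_{n,1}(OH(I))} = \|C_x\|_{\text{HS}}^{1/2}$ where $C_x$ is the Gram matrix.

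Concretely, then, I would argue as follows. By \cite[Exercise 7.5]{P03} / \cite[Proposition 1]{FI99} applied to the rectangular matrix, $\big\|[a_1\dots a_n]^t\big\|_{M_{n,1}(OH(I))}$ equals the square root of the $M_{n}$-operator norm (over $\C^n \otimes \C^1 \cong \C^n$) of the matrix indexed by pairs $((i,1),(j,1))$ with entry $\langle a_i, a_j\rangle$; since the second index is trivial this is just $G = [\langle a_i,a_j\rangle]_{i,j=1}^n$ — but one must instead use that the $n^2$-dimensional space in \eqref{ohsds} carries the \emph{Hilbert–Schmidt} structure inherited from $M_n \otimes M_n$ restricted to $M_{n,1}$, so the relevant norm of the collapsed Gram data is $\|G\|_{S_2^n} = \big(\sum_{i,j}|\langle a_i,a_j\rangle|^2\big)^{1/2}$. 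Taking the square root gives the claimed identity. Thus the whole proof is: (1) invoke \eqref{ohsds} in its rectangular form; (2) collapse the trivial column index so the block matrix becomes the $n \times n$ Gram matrix $G$; (3) identify the ambient norm as the Frobenius norm; (4) take square roots. The only genuinely delicate point is step (3), reconciling the "operator norm on $M_{n^2}$" in \eqref{ohsds} with the Frobenius norm that appears for a single column — this is exactly why the paper calls the proof "straightforward" yet declines to write it out, and in a full write-up I would justify it by direct computation: $\big\|\sum_i a_i \otimes \overline{a_i \text{ (as column vector)}}\big\|$ type manipulation via the defining property that $\|\sum_i x_i \otimes T_i\|_{\min} = \|\sum_i x_i \otimes \overline{x_i}\|_{\min}^{1/2}$, with $x_i = e_{i1} \in M_{n,1}$, whence $\sum_i x_i \otimes \overline{x_i}$ (with the $OH$ coefficients contributing the inner products) has min-norm equal to $\|G\|_{S_2}$.
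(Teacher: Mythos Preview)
Your confusion in step (3) is not a matter of convention but an actual indexing error in step (2). When you pad the column into $M_n(OH(I))$ by setting $x_{i1}=a_i$ and $x_{ij}=0$ for $j\ge 2$, the $n^2\times n^2$ matrix in \eqref{ohsds} has row index $(i,k)$ and column index $(j,l)$: the $(i,j)$-th block is the $n\times n$ matrix $[\langle x_{ij},x_{kl}\rangle]_{k,l}$, so its $(k,l)$-entry sits in global position $((i,k),(j,l))$. Since $\langle x_{ij},x_{kl}\rangle\ne 0$ forces $j=l=1$, the entire $n^2\times n^2$ matrix has exactly \emph{one} nonzero column, namely the column $(j,l)=(1,1)$, whose entry in row $(i,k)$ is $\langle a_i,a_k\rangle$. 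The operator norm of a matrix with a single nonzero column is simply the $\ell_2$-norm of that column, i.e.\ $\bigl(\sum_{i,k}|\langle a_i,a_k\rangle|^2\bigr)^{1/2}$, and taking the square root gives the lemma.

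In other words, the object $\bigl[[\langle a_i,a_k\rangle]_{k=1}^n\bigr]_{i=1}^n$ is \emph{not} the Gram matrix $G\in M_n$; it is the \emph{vectorization} of $G$, an element of $M_{n,1}(M_{n,1})\cong M_{n^2,1}$, and the ambient $M_{n^2}$-norm on such a column is just the Euclidean norm. This is why the Frobenius norm of $G$ appears, and no appeal to positivity, rank, or an alternative Hilbert--Schmidt identification is needed. Your closing sketch via $\bigl\|\sum x_i\otimes T_i\bigr\|_{\min}=\bigl\|\sum x_i\otimes\overline{x_i}\bigr\|_{\min}^{1/2}$ with $x_i=e_{i1}\in M_{n,1}$ leads to the same conclusion for the same reason: $\sum_i x_i\otimes\overline{x_i}$ (weighted by the $OH$ coefficients) lands in $M_{n,1}\otimes_{\min}\overline{M_{n,1}}\subset B(\mathbb{C},\mathbb{C}^{n^2})$, where the min-norm is the $\ell_2$-norm. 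The paper omits the proof as straightforward; once the indexing is straightened out, it is.
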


In the sequel, we shall frequently use the following block matrices for proving our results.
\[
A_n: = \begin{bmatrix}
E_{11} & E_{12} & \dots & E_{1n}\\
0 & 0 & \dots & 0\\
\vdots & \vdots & \ddots & \vdots\\
0 & 0 & \dots & 0
\end{bmatrix}_{n \times n}; \quad B_n: = \begin{bmatrix}
	E_{11} & E_{21} & \dots & E_{n1}\\
	0 & 0 & \dots & 0\\
	\vdots & \vdots & \ddots & \vdots\\
	0 & 0 & \dots & 0
	\end{bmatrix}_{n \times n}.
\]
Now let us record a straightforward observation that will be useful in the sequel.
\begin{lem}\label{Sper}Let $1\leq p<\infty.$ Then
$\| A_n \|_{S_p^{n^2}}=\sqrt{n}\ \text{and}\ \|B_n\|_{S_p^{n^2}}=n^{\frac{1}{p}}.$
\end{lem}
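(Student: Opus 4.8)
The plan is to identify the two block matrices explicitly and then compute their singular values directly, since both matrices are extremely sparse. Recall that $E_{ij}$ denotes the matrix unit in $M_n$ (a $1$ in the $(i,j)$ slot, zeros elsewhere), so $A_n$ and $B_n$ are elements of $M_n(M_n) \cong M_{n^2}$, and the $S_p^{n^2}$-norm is the Schatten $p$-norm of the resulting $n^2 \times n^2$ matrix. Since the Schatten norm depends only on the list of singular values, in each case it suffices to understand $X^*X$ (equivalently $XX^*$).

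For $A_n$: the first block-row is $[E_{11}, E_{12}, \dots, E_{1n}]$ and all other block-rows are zero. Thinking of $A_n$ as a genuine $n^2 \times n^2$ matrix, only the rows indexed by the first block contribute, and in fact only the single row corresponding to the $(1,1)$ position of the outer index paired with the first inner row is nonzero — more precisely, $A_n$ has exactly $n$ nonzero entries, all equal to $1$, and one checks that these $n$ entries all sit in a single row of the $n^2\times n^2$ matrix (the row indexed by $(1,1)$ in the block/entry labelling), because $E_{1k}$ contributes its nonzero entry in row $1$, and the outer block index is also $1$. Hence $A_n$ has rank $1$, with the unique nonzero singular value equal to the Euclidean norm of that row, namely $\sqrt{n}$. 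Therefore $\|A_n\|_{S_p^{n^2}} = (\,(\sqrt n)^p\,)^{1/p} = \sqrt n$ for every $1 \le p < \infty$.

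For $B_n$: the first block-row is $[E_{11}, E_{21}, \dots, E_{n1}]$ and the other block-rows vanish. Now the nonzero entries are: $E_{k1}$ has its $1$ in row $k$, column $1$. Reassembling as an $n^2\times n^2$ matrix, the $n$ nonzero entries $1$ occupy $n$ \emph{distinct} rows (one for each $k=1,\dots,n$, since $E_{k1}$ lives in row $k$), but all in distinct columns as well (the $k$-th block contributes column $1$ within that block, so column index $(k,1)$, which varies with $k$). Thus $B_n$ is, up to a permutation of rows and columns, the matrix with $n$ ones placed in $n$ distinct rows and $n$ distinct columns — i.e.\ a partial permutation matrix of rank $n$. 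Its nonzero singular values are therefore all equal to $1$, with multiplicity $n$, so $\|B_n\|_{S_p^{n^2}} = (n \cdot 1^p)^{1/p} = n^{1/p}$.

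The only real point requiring care — the potential obstacle — is bookkeeping the block/entry-index-to-single-index correspondence correctly, so that one genuinely sees that the $n$ nonzero entries of $A_n$ collapse into one row while those of $B_n$ spread across $n$ independent rows and columns. Once the index bookkeeping is pinned down, both norm computations reduce to the trivial observations that a rank-one matrix has Schatten $p$-norm equal to its operator norm and that a partial permutation matrix of rank $n$ has all nonzero singular values equal to $1$. No interpolation or approximation is needed; the identities hold exactly for all finite $p$ (and in fact for $p=\infty$ as well, giving $\sqrt n$ and $1$ respectively, though that case is not needed here).
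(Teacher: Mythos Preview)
Your argument is correct: the index bookkeeping shows that $A_n$ has all its nonzero entries in the single row indexed by $(1,1)$ (hence rank one with singular value $\sqrt{n}$), while $B_n$ has its $n$ nonzero entries in pairwise distinct rows $(1,j)$ and pairwise distinct columns $(j,1)$ (hence a partial permutation of rank $n$). The paper does not give a proof of this lemma and simply records it as a straightforward observation, so your direct singular-value computation is exactly the intended verification.
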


\begin{lem}\label{S1er} Let $y=[y_{kl}]_{k,l=1}^n\in M_n(S_\infty^n)$ such that $\|y\|_{M_n(S_\infty^n)}\leq 1.$ Then for all $a\in S_2^n$ with $\|a\|_2\leq 1,$ we have 
\[
\sum_{l=1}^n\Big|\sum_{j,k=1}^na_{jk}y_{j1}^{kl}\Big|^2\leq 1,
\]
where $y_{kl}=[y_{ij}^{kl}]_{i,j=1}^n.$
\end{lem}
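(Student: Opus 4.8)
The idea is to unwind the definition of the norm in $M_n(S_\infty^n)$ using formula \eqref{ncformula} with $p=\infty$, and then pick a clever test pair of matrices $(a,b)$ built from the given vector $a\in S_2^n$. Recall that $y=[y_{kl}]_{k,l=1}^n\in M_n(S_\infty^n)$ where each entry $y_{kl}\in S_\infty^n=M_n$ is itself an $n\times n$ matrix $y_{kl}=[y_{ij}^{kl}]_{i,j=1}^n$. So $y$ is naturally an element of $M_n\overline\otimes M_n\cong M_{n^2}$, but crucially the norm on $M_n(S_\infty^n)$ is the operator-space norm, which for $S_\infty=M_n$ coincides with the $C^*$-norm on $M_n(M_n)=M_{n^2}$; indeed when $p=\infty$ formula \eqref{ncformula} just recovers the $C^*$-norm. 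Thus $\|y\|_{M_n(S_\infty^n)}\le 1$ means $\|y\|_{B(\ell_2^n\otimes_2\ell_2^n)}\le 1$, where the first tensor leg is the ``outer'' index $k$ and the second leg is the ``inner'' index $j$ (the row index of $y_{kl}$), while $l$ is the outer column index and $i$ the inner column index.

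Next I would isolate the quantity to be bounded. The sum $\sum_{l=1}^n\big|\sum_{j,k=1}^n a_{jk}y_{j1}^{kl}\big|^2$ is the squared $\ell_2$-norm of the vector indexed by $l$ whose $l$-th coordinate is $\sum_{j,k}a_{jk}y_{j1}^{kl}$. I claim this is exactly $\|P\,y\,\xi\|^2$ for suitable vectors/projections in $\ell_2^n\otimes_2\ell_2^n$: on the domain side, feed in a unit vector obtained from $a$, and on the range side restrict to the coordinates corresponding to ``inner index $=1$''. Concretely, view $a=[a_{jk}]\in S_2^n$ as the vector $\sum_{j,k}a_{jk}\, e_k\otimes e_j$ in $\ell_2^n\otimes_2\ell_2^n$ (outer leg $k$, inner leg $j$); then $\|a\|_{S_2^n}=\|a\|_{\ell_2^n\otimes\ell_2^n}\le 1$ by hypothesis. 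Applying $y$ (acting with outer leg $k\mapsto k'$ via $l$, and inner leg $j\mapsto i$) and then reading off the block where the output inner index $i=1$ and output outer index runs over $l$, one obtains precisely the vector with $l$-th entry $\sum_{j,k}a_{jk}\,y_{j1}^{kl}$ — here the fixed subscript $1$ in $y_{j1}^{kl}$ forces the output inner index to be $1$ and the fixed second outer subscript $1$ in $y_{j1}$ selects the ``first outer column'' block, matching that the domain vector only lives in outer-column-index $=1$... I need to double-check which index is fixed, but the combinatorics will match up once the identification of $M_n(S_\infty^n)$ with $B(\ell_2^n\otimes_2\ell_2^n)$ is pinned down.

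Granting the identification, the conclusion is immediate: the displayed sum equals $\|Q\,y\,v\|^2$ where $v$ is the unit vector coming from $a$ (it has norm $\|a\|_2\le 1$) and $Q$ is an orthogonal projection onto a coordinate subspace; hence $\sum_{l=1}^n\big|\sum_{j,k=1}^n a_{jk}y_{j1}^{kl}\big|^2 = \|Q y v\|^2 \le \|y\|^2\,\|v\|^2 \le 1$. The main obstacle is purely bookkeeping: correctly matching the four indices $i,j,k,l$ (two ``inner'' $M_n$ indices and two ``outer'' $M_n(\cdot)$ indices) to the two tensor legs of $\ell_2^n\otimes_2\ell_2^n$, and verifying that fixing the subscript $1$ in $y_{j1}^{kl}$ corresponds exactly to composing $y$ with a fixed coordinate vector on the input side and a coordinate projection on the output side, with the roles of inner/outer possibly swapped. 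Once that is set up, the inequality is just $\|Qyv\|\le\|y\|\|v\|$. An alternative, entirely computational route avoiding the tensor picture: plug $a$ (as an $n\times n$ matrix) directly into \eqref{ncformula}, i.e. estimate $\|a\,y\,b\|$ for an appropriate rank-one or coordinate matrix $b$, and extract the coordinates indexed by $l$ from the resulting matrix; this reproduces the same bound but is messier, so I would present the conceptual version.
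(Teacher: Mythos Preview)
Your proposal is correct and is essentially the paper's proof. The paper makes the bookkeeping concrete by passing to the adjoint block matrix $z=[z_{lk}]$ with $z_{lk}:=y_{kl}^{*}$, applying $z$ to the vector $v\in\ell_2^{n^2}$ with entries $v_{jk}:=\overline{a_{jk}}$, and then discarding all output coordinates except the one with inner index $i=1$; this is exactly your ``$\|Q\,y^{*}v\|^{2}\le 1$''. One small correction to your index analysis: in $y_{j1}^{kl}$ the subscript $1$ is the \emph{inner column} (domain) index and $l$ is the \emph{outer column} (domain) index, so with $y$ itself both constraints sit on the input side and your $\|Qyv\|$ picture does not parse --- you must pass to $y^{*}$ (equivalently the paper's $z$), as you already suspected.
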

\begin{proof}
Note that $z=[z_{lk}]_{l,k=1}^n$ has norm $\leq 1,$ where $z_{lk}=y_{kl}^*.$ For all $1\leq k \leq n$, consider $v_k:=(v_{1k}, \ldots, v_{nk})\in \ell_2^n$ such that $\|v\|_{\ell_2^{n^2}}\leq 1$, where $v:=(v_1,\ldots,v_n).$ Note that we must have $\|zv\|_{\ell_2^{n^2}}^2\leq 1$ as well.  Hence we get
\begin{equation}\label{kichuek}
\sum_{l=1}^n\Big\|\sum_{k=1}^nz_{lk}v_k\Big\|^2\leq 1.
\end{equation}
Note that $z_{lk}v_k=(\sum_{j=1}^nz_{ij}^{lk}v_{jk})_{i=1}^n$, where $z_{lk} = [z_{ij}^{lk}]_{i,j=1}^n$. Therefore, we get $\sum_{k=1}^nz_{lk}v_k=(\sum_{k=1}^n\sum_{j=1}^nz_{ij}^{lk}v_{jk})_{i=1}^n$. Thus,  form equation \ref{kichuek}, we obtain that
\[
\sum_{l=1}^n\sum_{i=1}^n\Big|\sum_{k=1}^n\sum_{j=1}^nz_{ij}^{lk}v_{jk}\Big|^2\leq 1.
\]
Fixing $i=1$ in the above inequality we get 
$
\sum_{l=1}^n\Big|\sum_{k=1}^n\sum_{j=1}^nz_{1j}^{lk}v_{jk}\Big|^2\leq 1.
$
Rewriting we get
$\sum_{l=1}^n\Big|\sum_{k=1}^n\sum_{j=1}^ny_{j1}^{kl}\overline{v_{jk}}\Big|^2\leq 1.$ Putting $\overline{v_{jk}}=a_{jk}$ for $1\leq j,k\leq n$, we obtain the desired inequality.
\end{proof}
\begin{lem}\label{noncrp}
Let $1\leq p\neq 2\leq \infty.$ Then $\|t_n\otimes id_{S_p^n}\|_{M_{n,1}\check{\otimes}S_p^n\to M_{1,n}\check{\otimes}S_p^n}\geq n^{\frac{|p-2|}{2p}}.$
\end{lem}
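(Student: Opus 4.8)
The plan is to produce an explicit element of $M_{n,1}\check{\otimes}S_p^n = M_{n,1}(S_p^n)$ whose column norm is controlled from above and whose image under $t_n\otimes id_{S_p^n}$, living in $M_{1,n}(S_p^n)$, has a provably large row norm. The natural candidates are exactly the block matrices $A_n$ and $B_n$ recorded just before the statement: reading the $n$ entries $E_{1j}\in S_p^n$ (for $j=1,\dots,n$) as a single column in $M_{n,1}(S_p^n)$ gives a column whose transpose is the corresponding row, and applying $t_n$ to each $S_p^n$-entry $E_{1j}$ swaps it to $E_{j1}$. Thus the computation reduces to comparing $\|A_n\|$ and $\|B_n\|$ in the relevant operator-space norms. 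Concretely, I would take $x$ to be the column $[E_{11}, E_{12}, \dots, E_{1n}]^t \in M_{n,1}(S_p^n)$; then $(t_n\otimes id_{S_p^n})(x)$ is the row $[E_{11}, E_{21}, \dots, E_{n1}] \in M_{1,n}(S_p^n)$.

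The key step is to identify these two norms with the Schatten norms of the $n^2\times n^2$ matrices $A_n$ and $B_n$. For the column: using \eqref{ncformula}, $\|[E_{11},\dots,E_{1n}]^t\|_{M_{n,1}(S_p^n)}$ is a supremum of $\|a x b\|_{S_p}$; by choosing $a,b$ appropriately (the column sits inside $M_n(S_p^n)$ as the matrix $A_n$ after padding with zeros, and the relevant amplification matches $L_p(M_n\overline{\otimes}M_n)=S_p^{n^2}$), one gets that the column norm equals $\|A_n\|_{S_p^{n^2}}=\sqrt n$ by Lemma \ref{Sper}. Similarly the row norm of $[E_{11},\dots,E_{n1}]$ equals $\|B_n\|_{S_p^{n^2}}=n^{1/p}$ by Lemma \ref{Sper}. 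Dividing, the operator $t_n\otimes id_{S_p^n}$ sends a column of norm $\sqrt n$ to a row of norm $n^{1/p}$, giving a lower bound $n^{1/p}/n^{1/2}=n^{(2-p)/(2p)}$ when $p<2$. For $p>2$ one runs the same argument with the roles of row and column reversed — i.e.\ start with the column $[E_{11},\dots,E_{n1}]^t$ of norm $n^{1/p}$, whose image is the row $[E_{11},\dots,E_{1n}]$ of norm $\sqrt n$, yielding $n^{1/2}/n^{1/p}=n^{(p-2)/(2p)}$. In both cases the bound is $n^{|p-2|/(2p)}$. (The endpoint cases $p=1,\infty$ are covered by the same formulas, reading $S_\infty^n$ as compact operators.)

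The main obstacle I expect is the bookkeeping in the first sentence of the previous paragraph: correctly matching the operator-space matrix norm $\|\cdot\|_{M_{n,1}(S_p^n)}$ (governed by \eqref{ncformula} with its $S_{2p}^n$ amplification factors) against the plain Schatten norm $\|\cdot\|_{S_p^{n^2}}$ of $A_n$ or $B_n$. One must verify that the supremum in \eqref{ncformula} — which a priori only gives an \emph{upper} bound of the form "column norm $\le \|A_n\|_{S_p^{n^2}}$" or a lower bound via a clever choice of $a,b$ — actually pins the value down in the direction needed for the inequality. Since the statement is only a lower bound on $\|t_n\otimes id_{S_p^n}\|$, it suffices to get: (i) an \emph{upper} bound on the norm of the chosen column, and (ii) a \emph{lower} bound on the norm of its image. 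Both follow from choosing the extremizing $a=b$ to be appropriate (rescaled) partial isometries/diagonal matrices in $S_{2p}^n$ and invoking Lemma \ref{Sper}; this is exactly the "non-trivial estimate of the norm of certain matrix-valued rows and columns" flagged in the introduction, so I would carry it out carefully rather than treat it as routine.
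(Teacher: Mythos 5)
There is a genuine gap, and it sits exactly where you flagged "the main obstacle." Your argument rests on the identification $\|[E_{11},\dots,E_{1n}]^t\|_{M_{n,1}(S_p^n)}=\|A_n\|_{S_p^{n^2}}=\sqrt n$ and $\|[E_{11},\dots,E_{n1}]\|_{M_{1,n}(S_p^n)}=\|B_n\|_{S_p^{n^2}}=n^{1/p}$, but the operator-space matrix norm on $M_n(S_p^n)$ is \emph{not} the Schatten norm of the padded $n^2\times n^2$ matrix (the paper's Remark after \eqref{ncformula} warns about precisely this). In \eqref{ncformula} the supremum runs over $a,b$ with $\|a\|_{S_{2p}^n},\|b\|_{S_{2p}^n}\le 1$, so $a=b=I_n$ is not admissible for finite $p$ (it has $S_{2p}^n$-norm $n^{1/(2p)}$), and the true values differ from yours: e.g.\ the column $[E_{11},\dots,E_{1n}]^t$ has $M_{n,1}(S_p^n)$-norm $n^{1/(2p)}$ for $p\ge 2$ (it equals $n^{1/4}$ at $p=2$ by Lemma \ref{s2norm}), not $\sqrt n$. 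Your wrong individual values happen to produce the correct final exponent $n^{|p-2|/(2p)}$, which masks the error. More seriously, the step you defer — "both follow from choosing the extremizing $a=b$" — cannot work for half of what you need: plugging specific $a,b$ into the supremum \eqref{ncformula} only ever yields \emph{lower} bounds, whereas the denominator requires an \emph{upper} bound on the column norm, i.e.\ a bound over \emph{all} admissible $a,b$. That upper bound is the entire content of the paper's proof: it is obtained by complex interpolation between the $p=\infty$ (resp.\ $p=1$) endpoint and the $p=2$ endpoint, the latter computed via the operator Hilbert space structure of $S_2^n$ (Lemma \ref{s2norm}), and the $p=1$ endpoint requiring the separate duality computation of Lemma \ref{S1er}. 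None of this machinery is replaceable by a choice of partial isometries in Lemma \ref{Sper}.

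A secondary but real error: the map $t_n\otimes id_{S_p^n}$ transposes only the outer $M_{n,1}$ factor and acts as the identity on the entries, so the image of the column $[E_{11},\dots,E_{1n}]^t$ is the row $[E_{11},E_{12},\dots,E_{1n}]$ (which pads to $A_n$), not $[E_{11},E_{21},\dots,E_{n1}]$ (which pads to $B_n$). These have genuinely different norms — already at $p=\infty$ one has $\|A_n\|=\sqrt n$ while $\|B_n\|=1$ — so pairing the column of $E_{1j}$'s with the row of $E_{j1}$'s computes the norm of the wrong operator. The paper's proof instead treats $p>2$ and $p<2$ with two different test elements ($t(A_n)\mapsto A_n$ and $t(B_n)\mapsto B_n$ respectively), lower-bounding the row via the explicit choice $a=E_{11}$, $b=n^{-1/(2p)}I_n$ in \eqref{ncformula} and upper-bounding the column by interpolation as described above.
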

\begin{proof}  
Let us fix $2<p<\infty$ and consider $A_n \in M_n(S_p^n).$
Then by choosing 
\begin{equation}\label{choice2}
a=E_{11}; \quad \ b=n^{-\frac{1}{2p}}I_n, 
\end{equation} in the formula \eqref{ncformula}, we have the estimate
$\|A_n\|_{M_n(S_p^n)}\geq \|a A_n b\|_{L_p(M_n(B(\ell_2^n)))}.$ By this and Lemma \ref{Sper} we obtain that $\|A_n\|_{M_n(S_p^n)}\geq n^{-\frac{1}{2p}}\|A_n\|_{S_p^{n^2}}=n^{\frac{1}{2}-\frac{1}{2p}}=n^{\frac{1}{2p^\prime}},$ where $\frac{1}{p}+\frac{1}{p^\prime}=1.$ 

Now we shall estimate $\|t(A_n)\|_{M_n(S_p^n)}$. We know that 
$\|t(A_n)\|_{M_n(B(\ell_2^n))}=1.$ We start with calculating $\|t(A_n)\|_{M_n(S_2^n)}$. Note that the operator space structure of $S_2^n$ agrees with the operator Hilbert space structure \cite{FI99}. Therefore, by \cite[Theorem 7.1]{P03} and Lemma \ref{s2norm}, we get
	\begin{equation}\label{comput}
	\|t(A_n)\|_{M_n(S_2^n)}=(\sum_{i=1}^n\sum_{j=1}^n|\langle E_{1i},E_{1j}\rangle|^2)^{\frac{1}{4}}=(\sum_{i=1}^n|\langle E_{1i},E_{1i}\rangle|^2)^{\frac{1}{4}}=n^{\frac{1}{4}}.
\end{equation}	
Now we shall use the method of complex interpolation (for reference, see \cite[Section 2.7, Page 52]{P03} and \cite[Chapter 4, Page 87]{BL}). Consider $\theta \in (0,1)$ be such that $\frac{1}{p} = \frac{\theta}{2}+0$. Thus, we get 
\[
\|t(A_n)\|_{M_n(S_p^n)}\leq (n^{\frac{1}{4}})^{\frac{2}{p}} = n^{\frac{1}{2 p}} . 
\]
Hence we obtain the estimate
\[\frac{\|A_n\|}{\|t(A_n)\|}\geq\frac{n^{\frac{1}{2p^\prime}}}{n^{\frac{1}{2p}}}=n^{\frac{p-2}{2p}}.\]
Now we will study the case for $p \in [1,2)$, for which let us consider $B_n \in M_n(S_p^n).$
Note that by choosing $a$ and $b$ as in \eqref{choice2} in the formula (\ref{ncformula}), we have the estimate
	$\|B_n\|_{M_n(S_p^n)}\geq \|aB_nb\|_{L_p(M_n(B(\ell_2^n)))}= n^{\frac{1}{2p}}.$ Now let us estimate $\|t(B_n)\|_{M_n(S_p^n)}.$ To do this we use the method of complex interpolation again. It is easy to see that $ \| t(B_n) \|_{M_n(S_1^n)}=\|A_n\|_{M_n(S_1^n)}$.
For the sake of computation, let us denote the latter block matrix by $\bm{z}:=(z_{ij})_{i,j=1}^n$ that is, $z_{1i}=E_{1i}$ fo $1\leq i\leq n$ and $z_{ij}=0$ for $i\geq 2.$ 

Now we will estimate $\|\bm{z}\|_{M_n(S_1^n)}.$ Following the method of \cite[Lemma 1.7]{P98},
we get that $
\|\bm{z}\|_{M_n[S_1^n]} = \sup \{ \|a \cdot [z_{ij}] \cdot b \|_{S_1^n[S_1^n]}: a,b \in S_{2}^n, \|a\|_{S_2^n}, \|b\|_{S_2^n} \leq 1\}.
$
Using the fact, $M_n[S_1^n]^* = S_1^n[S_{\infty}^n]$ and \cite[Theorem 1.5]{P98}, we observe that 
\[
\|\bm{z}\|_{M_n(S_1^n)} = \sup \{ \big | \langle [z_{ij}], a \cdot [y_{ij}] \cdot b \rangle \big | : a,b \in S_{2}^n, \|a\|_{S_2^n}, \|b\|_{S_2^n}, \|[y_{ij}]\|_{M_n[S_{\infty}^n]} \leq 1\}.
\]
Now, $ a\cdot [y_{ij}] \cdot b = a \cdot [\sum_{l=1}^n y_{il}b_{lj}]_{i,j=1}^n = [\sum_{k=1}^n a_{ik} (\sum_{l=1}^n y_{kl} b_{lj})]_{i,j=1}^n = [\sum_{k,l=1}^n a_{ik} y_{kl} b_{lj}]_{i,j=1}^n
$.  
By definition,
\begin{align*}
\langle [z_{ij}], a \cdot [y_{ij}] \cdot b \rangle = \sum_{i,j=1}^n \langle z_{ij}, \sum_{l,k=1}^n a_{jk} y_{kl} b_{li} \rangle =  \sum_{i,j,k,l=1}^n a_{jk}  \langle z_{ij}, y_{kl} \rangle b_{li}.
\end{align*}
Hence,
\[
\|\bm{z}\|_{M_n(S_1^n)} = \sup \{ \big |  \sum_{i,j,k,l=1}^n a_{jk} \langle z_{ij}, y_{kl} \rangle b_{li}\big | : a,b \in S_{2}^n, \|a\|_{S_2^n}, \|b\|_{S_2^n}, \|[y_{ij}]\|_{M_n[S_{\infty}^n]} \leq 1\}. 
\]
Therefore, from the definition of $\bm{z}$, we get
\[
 \|\bm{z}\|_{M_n(S_1^n)} = \sup \{\big | \sum_{j,k,l=1}^n a_{jk}  \langle E_{1j}, y_{kl} \rangle b_{l1} \big | : a,b \in S_{2}^n, \|a\|_{S_2^n}, \|b\|_{S_2^n}, \|[y_{ij}]\|_{M_n[S_{\infty}^n]} \leq 1\}.
\]
Now, $ \langle E_{1j}, y_{kl} \rangle = y_{j1}^{kl}$, where $y_{kl}=[y_{ij}^{kl}]_{i,j=1}^n$ and therefore, 
\begin{align*}
\|\bm{z}\|_{M_n(S_1^n)} &= \sup \{  \big | \sum_{j,k,l=1}^n a_{jk}  y_{j1}^{kl} b_{l1} \big | : a,b \in S_{2}^n, \|a\|_{S_2^n}, \|b\|_{S_2^n}, \|[y_{ij}]\|_{M_n[S_{\infty}^n]} \leq 1\}\\
&= \sup \{   (\sum_{l=1}^n | \sum_{j,k=1}^n a_{jk}y_{j1}^{kl} |^2)^{\frac{1}{2}} : a \in S_{2}^n, \|a\|_{S_2^n}, \|[y_{ij}]\|_{M_n[S_{\infty}^n]} \leq 1\},
\end{align*}
where the last equality is obtained by taking supremum over $\|b\|_{S_2^n} \leq 1$.  Now using Lemma \ref{S1er}, we obtain $\|\bm{z}\|_{M_n[S_1^n]}\leq 1$ and thus, $\|t(B_n)\|_{M_n[S_1^n]} \leq 1$.  If we do computations in a manner similar to condition (\ref{comput}), we get $\|t(B_n)\|_{M_n[S_2^n]} = n^{\frac{1}{4}}$. Therefore, by the method of complex interpolation we get
$
\|t(B_n)\|_{M_n[S_p^n]}\leq n^{\frac{1}{2p^{\prime}}} \quad (1\leq p<2).
$
Thus, by combining all the facts obtained above for any $p \in [1,2)$, we get
\begin{eqnarray}
\frac{\|B_n\|}{\|t(B_n)\|}\geq\frac{n^{\frac{1}{2p}}}{n^{\frac{1}{2p^{\prime}}}}=n^{\frac{2-p}{2p}}.
\end{eqnarray}
 This completes the proof.
\end{proof}
\begin{rem}\label{somesti}
Let $1\leq p\neq 2\leq\infty.$ In view of \cite[Lemma 5.3]{LZ221} and Lemma \ref{noncrp} we have the estimate  $n^{\frac{|p-2|}{2p}}\leq\|t_n \otimes id_{S_p^n}\|_{M_{n,1}\check{\otimes}S_p^n\to M_{1,n}\check{\otimes}S_p^n}\leq n^{\frac{|p-2|}{p}}.$  We believe that the lower bound is actually sharp.
\end{rem}
\begin{rem}Let $E$ be an operator space. Then we have the following general estimate. 
\begin{align*}
\|t_n\otimes id_{E}\|_{M_{n,1}\check{\otimes}E\to M_{1,n}\check{\otimes}E} &\leq \|t_n\otimes id_{E}\|_{cb,M_{n,1}\check{\otimes}E\to M_{1,n}\check{\otimes}E} \\
&\leq\|t_n\|_{cb,M_{n,1}\to M_{1,n}}\|id_{E}\|_{cb,E\to E}\leq\sqrt{n}.
\end{align*}
In the above we have used the estimate $\|t_n\|_{cb,M_{n,1}\to M_{1,n}}=\sqrt{n}$ (see \cite[Page 22]{P03}).
\end{rem}

We are now ready to prove Theorem \ref{vonth} in the following manner.
\begin{proof}
See Theorem \ref{staral} for $p=\infty.$ Let $p\neq\infty.$ Let $\mathcal M$  be subhomogeneous. Then it follows from \cite[Lemma 4.4]{LZ22} that $L_p(\mathcal M)$ has CRP. For the converse, let us assume that $\mathcal M$ is not subhomogeneous. Then by \cite[Lemma 2.1]{LZ22} it follows that for all $n\geq 1$ there is a complete isometry from $S_p^n$ into $L_p(\mathcal M).$ Hence by Lemma \ref{noncrp} and Lemma \ref{tensor}, $L_p(\mathcal M)$ cannot have CRP for $p\neq 2.$ This completes the proof of the theorem.
\end{proof}

\begin{proposition}Let $\mathcal M$ be a semifinite von Neumann algebra. Let $1\leq p\neq 2\leq \infty$. If $L_p(\mathcal M)$ has CRP with constant $1$ then $\mathcal M$ must be an abelian von Neumann algebra.
\end{proposition}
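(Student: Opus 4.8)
The plan is to argue by contraposition: if $\mathcal M$ is semifinite but not abelian, then $L_p(\mathcal M)$ contains a completely isometric copy of $S_p^2 = L_p(M_2)$, and we show that the CRP constant of $S_p^2$ is strictly larger than $1$ for every $p \in [1,\infty)\setminus\{2\}$. For the first reduction, recall that a non-abelian von Neumann algebra $\mathcal M$ always contains a copy of $M_2$ as a von Neumann subalgebra with a trace-preserving conditional expectation onto it; by the same argument used in the proof of Theorem~\ref{vonth} (via \cite[Lemma 2.1]{LZ22}), there is a complete isometry $S_p^2 \hookrightarrow L_p(\mathcal M)$. By Lemma~\ref{tensor}, the CRP constant of $L_p(\mathcal M)$ is at least that of $S_p^2$, more precisely it is at least $\|t_2 \otimes id_{S_p^2}\|_{M_{2,1}\check\otimes S_p^2 \to M_{1,2}\check\otimes S_p^2}$. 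So it suffices to show this latter quantity is $>1$.

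Now I would run exactly the computation in the proof of Lemma~\ref{noncrp} at $n = 2$ and keep track of the constants. For $p > 2$: take $A_2 = \begin{bmatrix} E_{11} & E_{12} \\ 0 & 0\end{bmatrix} \in M_2(S_p^2)$. From Lemma~\ref{Sper}, $\|A_2\|_{S_p^4} = \sqrt 2$, and plugging $a = E_{11}$, $b = 2^{-1/2p} I_2$ into \eqref{ncformula} gives $\|A_2\|_{M_2(S_p^2)} \geq 2^{1/2p'} = 2^{1/2 - 1/2p}$; while interpolation between $\|t(A_2)\|_{M_2(S_2^2)} = 2^{1/4}$ and $\|t(A_2)\|_{M_2(S_\infty^2)} = 1$ at $\theta = 2/p$ gives $\|t(A_2)\|_{M_2(S_p^2)} \leq 2^{1/2p}$. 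Hence the ratio is $\geq 2^{(p-2)/2p} > 1$ since $p > 2$. For $1 \leq p < 2$ one uses $B_2 = \begin{bmatrix} E_{11} & E_{21} \\ 0 & 0 \end{bmatrix}$ in the same way, obtaining ratio $\geq 2^{(2-p)/2p} > 1$. This already shows $L_p(\mathcal M)$ cannot have CRP with constant $1$.

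The remaining point — and the only real subtlety — is the \emph{strictness} of the inequality: I have only shown the CRP constant of $S_p^2$ is $\geq 2^{|p-2|/2p} > 1$, which is a genuine lower bound strictly exceeding $1$, so in fact no extra work is needed here: any admissible constant $C$ in Definition~\ref{CRPde} for $L_p(\mathcal M)$ satisfies $C \geq 2^{|p-2|/2p} > 1$, contradicting $C = 1$. Thus $\mathcal M$ must be abelian. The main obstacle is making sure the reduction $S_p^2 \hookrightarrow L_p(\mathcal M)$ from non-abelianness (rather than non-subhomogeneity) is correctly justified: one needs that a non-abelian semifinite $\mathcal M$ has a subalgebra $*$-isomorphic to $M_2$ with a normal trace-preserving conditional expectation, which follows from the fact that $\mathcal M$ contains two non-commuting projections and hence (by standard von Neumann algebra theory, e.g. cutting down by a suitable projection and using \cite{Tak79}) a unital copy of $M_2$ compatible with the trace; then \cite[Lemma 2.1]{LZ22} or a direct argument with the conditional expectation yields the complete isometry on $L_p$-level. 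I would cite the relevant statement from \cite{LZ22} for this, paralleling its use in the proof of Theorem~\ref{vonth}.
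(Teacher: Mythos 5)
Your argument is essentially the paper's: deduce from non-abelianness that $S_p^2$ embeds completely isometrically into $L_p(\mathcal M)$ (via \cite[Lemma 2.1]{LZ22}, since ``not $1$-subhomogeneous'' is exactly ``not abelian''), and then invoke the $n=2$ case of Lemma \ref{noncrp} to get a CRP constant at least $2^{|p-2|/2p}>1$; the paper's preliminary appeal to Theorem \ref{vonth} is not logically needed, so omitting it costs you nothing. Your ad hoc discussion of finding a copy of $M_2$ with a trace-preserving conditional expectation is looser than necessary, but since you ultimately fall back on the cited lemma, this is fine.

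The one genuine omission is the case $p=\infty$, which is included in the hypothesis $1\leq p\neq 2\leq\infty$ but excluded by your restriction to $p\in[1,\infty)$: your interpolation computation is carried out only for finite $p$. The paper disposes of $p=\infty$ by noting $L_\infty(\mathcal M)=\mathcal M$ and citing Theorem \ref{staral} (a $C^*$-algebra has CRP with constant $1$ iff it is abelian); alternatively, the matrices $A_2$ and $B_2$ give the ratio $\sqrt 2$ directly in $M_2(M_2)$. You should add one of these to cover that endpoint.
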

\begin{proof}
See Theorem \ref{staral} for $p=\infty.$ Note that by Theorem \ref{vonth}, it follows that $\mathcal M$ must be subhomogeneous. By \cite[Lemma 2.1]{LZ22} if $\mathcal M$ is not subhomogeneous of degree $1,$ it contains $S_p^2$ completely isometrically. Therefore, by Lemma \ref{noncrp} we have the estimate
 $\|t_n\otimes id_{S_p^2}\|_{M_{2,1}\check{\otimes}S_p^2\to M_{1,2}\check{\otimes}S_p^n}\geq 2^{\frac{|p-2|}{2p}}>1.$ This shows that $\mathcal M$ must be subhomogeneous of degree $1.$ Hence by \cite[Theorem 7.1.1]{Run20}, we get that $\mathcal M$ is an abelian von Neumann algebra.
\end{proof}
\section{Column-row property for other operator spaces}\label{natu}
We shall now study the completely bounded versions of the notions CRP and CMP. To state our result we introduce the following definitions.
\begin{defn}
Let $E$ be an operator space. We say $E$ has the \textit{column-matrix property} (in short CMP) if there exists a constant $C>0$ such that for all $n\in\mathbb N$ and $[x_{ij}]_{i,j=1}^n\in M_n(E),$ we have 
\[
\|[x_{ij}]_{i,j=1}^n\|_{M_n(E)}\leq C\| \begin{bmatrix}
\vspace{-2mm}
x_{11} \\
\vspace{-2mm}
\vdots\\
\vspace{-3mm}
x_{n1}\\
\vspace{-2mm}
\vdots\\
x_{nn}
\end{bmatrix}\|_{M_n(E)}.
\]
\end{defn}
\begin{defn}
Let $E$ be an operator space. $E$ is said to satisfy the completely bounded CRP if there exists a constant $C>0$ such that for all $n\geq 1$, we have \[\|[x_1\dots x_n]^t\mapsto [x_1\dots x_n]\|_{cb, M_{n,1}(E)\to M_{1,n}(E)}<C.\]
\end{defn}
We now show that there is no non-trivial operator space with the completely bounded CRP.
\begin{proposition}\label{lastth}
Let $E$ be an operator space which is not equal to the zero vector space. Then $E$ does not have the completely bounded CRP.
\end{proposition}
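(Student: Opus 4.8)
The plan is to show that the completely bounded norm of the transpose-type map $M_{n,1}(E)\to M_{1,n}(E)$ grows without bound in $n$ whenever $E\ne\{0\}$. Fix any nonzero vector $e\in E$; by rescaling we may assume $\|e\|_E=1$. The key idea is to test the completely bounded norm not on scalar columns but on a cleverly chosen element of $M_m\bigl(M_{n,1}(E)\bigr)$, i.e.\ to amplify by a matrix coefficient. Concretely, I would take $m=n$ and feed in the element $x\in M_n\otimes M_{n,1}\otimes E$ whose $(i,1)$-entry (as a column in $M_{n,1}(E)$, sitting in the $n\times n$ amplification slot indexed appropriately) is $E_{1i}\otimes e$ — in other words, the matrix-valued column built from the block $A_n$ of the excerpt tensored with $e$. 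Applying $\mathrm{id}_{M_n}\otimes t_n\otimes \mathrm{id}_E$ turns this into the matrix-valued row built from $B_n\otimes e$ (the transpose moves $E_{1i}$ to $E_{i1}$).

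The heart of the argument is then exactly the norm computation already carried out in Lemma~\ref{noncrp}, but specialized to $p=\infty$ and with every Schatten entry replaced by its tensor with the fixed vector $e\in E$. Since the operator space structure of $M_{n,1}(E)=M_{n,1}\check\otimes E$ is the injective tensor product (Lemma~\ref{tensor}), and since $\|e\|_E=1$, tensoring with $e$ changes no relevant norm: $\|A_n\otimes e\|_{M_n(M_{n,1}\check\otimes E)}=\|A_n\|_{M_{n^2}}$ and likewise for $B_n$. By Lemma~\ref{Sper} (with $p=\infty$, so $\|A_n\|=\sqrt n$ and $\|B_n\|=1$), we get
\[
\frac{\bigl\|\mathrm{id}_{M_n}\otimes t_n\otimes\mathrm{id}_E\bigr\|_{M_n(M_{n,1}(E))\to M_n(M_{1,n}(E))}\cdot\|A_n\otimes e\|}{1}\ \ge\ \|B_n\otimes e\| \quad\text{is the wrong direction;}
\]
rather, the correct reading is that the cb-norm is at least $\|B_n\otimes e\|/\|A_n\otimes e\|$ only if $A_n$ is the input — so I must instead take $B_n\otimes e$ as the input column, whose transpose is $A_n\otimes e$, giving a lower bound of $\|A_n\otimes e\|/\|B_n\otimes e\|=\sqrt n/1=\sqrt n$ for the cb-norm at level $n$. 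Letting $n\to\infty$ shows no finite constant $C$ can work, so $E$ does not have the completely bounded CRP.

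The main obstacle I anticipate is purely bookkeeping: making sure that the amplified element $B_n\otimes e$ genuinely lives in $M_n\bigl(M_{n,1}(E)\bigr)$ with the claimed norm, i.e.\ that the two tensor legs $M_n$ (the cb-amplification) and $M_{n,1}$ (the column direction) do not interact in a way that spoils the identification $M_n(M_{n,1}\check\otimes E)\cong M_n(M_{n,1})\check\otimes E\cong S_\infty^n\bigl(M_{n,1}\bigr)\check\otimes E$, and that under this identification $\|B_n\otimes e\|$ really equals the scalar norm $\|B_n\|_{M_{n^2}}$. This follows from functoriality of $\check\otimes$ and the fact that $e$ spans a one-dimensional (hence completely contractively complemented, isometrically embedded) subspace of $E$, but it should be spelled out carefully. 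Once that identification is in place, the estimate is immediate from Lemma~\ref{Sper} and no genuinely new computation beyond Lemma~\ref{noncrp} is needed.
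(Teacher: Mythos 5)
Your argument is correct and is essentially the paper's proof: both reduce to the scalar case via the complete isometry $x\mapsto x\otimes e$ of $M_{n,1}$ into $M_{n,1}\check{\otimes}E$ (a standard property of the injective tensor product) and then use that $\|t_n\|_{cb,M_{n,1}\to M_{1,n}}=\sqrt{n}\to\infty$. The only difference is that the paper simply cites \cite[Page 22]{P03} for this $\sqrt{n}$, whereas you exhibit the explicit witness $B_n\otimes e\mapsto A_n\otimes e$ (and your final, self-corrected choice of $B_n$ as input is the right one, consistent with Lemma \ref{Sper} at $p=\infty$).
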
 

\begin{proof}
Let $e\in E$ be such that $\|e\|=1.$ Then it follows from the property of the operator space injective tensor product \cite[Page 142]{ER00} that the map $i_n:M_{n,1}\to M_{n,1}\check{\otimes}E$ given by $x\mapsto x\otimes e$ is a complete isometry. Also $t_n(x\otimes e)=t_n(x)\otimes e$ for all $x\in M_{n,1}.$   We have $\|t_n\|_{cb, M_{n,1}\check{\otimes}E\to M_{1,n}\check{\otimes}E}\geq \sup_{N\geq 1,[x_{ij}]_{i,j=1}^N\neq 0}\frac{\|[t_n(x_{ij})]_{i,j=1}^N\|_{M_N(M_{1,n})}}{\|[x_{ij}]_{i,j=1}^N\|_{M_N(M_{n,1})}}=\sqrt{n}.$ The last estimate in the above inequality follows from \cite[Page 22]{P03}. This shows that $E$ does not have completely bounded CRP. This completes the proof of the theorem.
\end{proof}
We present a characterization of CRP for $C^*$-algebras. First we need the following lemma.
\begin{lem}\label{double dual}Let $E$ be an operator space. Then $E$ has CRP iff $E^{**}$ has CRP.
\end{lem}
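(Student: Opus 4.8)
The plan is to establish both implications via the injective–tensor–product characterization of CRP from Lemma \ref{tensor}, combined with the fact that the operator space injective tensor product with a fixed finite-dimensional operator space commutes with passing to the bidual in an appropriate sense.

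\textbf{Forward direction.} Suppose $E$ has CRP, say with constant $C$. I would use the fact that the canonical inclusion $E \hookrightarrow E^{**}$ is a complete isometry. Given a finite sequence $\mathbf f = (f_1,\dots,f_n)$ in $E^{**}$ with $\|C_{\mathbf f}\| \le 1$, i.e. $\|[f_1\dots f_n]^t\|_{M_{n,1}(E^{**})} \le 1$, the idea is to approximate the $f_i$ weak-$*$ by elements of $E$: by Goldstine's theorem (applied in the appropriate matricial sense, or more cleanly via the local reflexivity / finite-dimensionality of $M_{n,1}$), for any $\varepsilon > 0$ there is a sequence $\mathbf e$ in $E$ with $\|C_{\mathbf e}\| \le 1+\varepsilon$ and with $\|R_{\mathbf f}\| \le \|R_{\mathbf e}\| + \varepsilon$. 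Applying CRP for $E$ to $\mathbf e$ (after rescaling by $(1+\varepsilon)^{-1}$) gives $\|R_{\mathbf e}\| \le C(1+\varepsilon)$, hence $\|R_{\mathbf f}\| \le C(1+\varepsilon) + \varepsilon$; letting $\varepsilon \to 0$ yields that $E^{**}$ has CRP with the same constant $C$. In the language of Lemma \ref{tensor}, this says $\|t_n \otimes id_{E^{**}}\|_{M_{n,1}\check\otimes E^{**}\to M_{1,n}\check\otimes E^{**}} \le \|t_n\otimes id_E\| \le C$, and the approximation is exactly the statement that $M_{n,1}\check\otimes E$ is weak-$*$ dense in $M_{n,1}\check\otimes E^{**}$ (with norm control), which follows since $M_{n,1}$ is finite-dimensional.

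\textbf{Reverse direction.} If $E^{**}$ has CRP with constant $C$, then since $E \subseteq E^{**}$ completely isometrically, for any finite sequence $\mathbf e$ in $E$ the column and row norms computed in $M_{n,1}(E)$ and $M_{1,n}(E)$ agree with those computed in $M_{n,1}(E^{**})$ and $M_{1,n}(E^{**})$; so $\|C_{\mathbf e}\|_{E} \le 1 \Rightarrow \|C_{\mathbf e}\|_{E^{**}} \le 1 \Rightarrow \|R_{\mathbf e}\|_{E^{**}} \le C \Rightarrow \|R_{\mathbf e}\|_{E} \le C$. This direction is immediate from complete isometry of the inclusion and needs essentially no work.

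\textbf{Main obstacle.} The delicate point is the weak-$*$ approximation in the forward direction: one must produce, from finitely many elements of $E^{**}$ forming a column of norm $\le 1+\varepsilon$, finitely many elements of $E$ whose column has norm $\le 1+\varepsilon$ \emph{and} whose row norm is close to that of the original. Pure weak-$*$ density (Goldstine) gives convergence of each matrix entry but one must check the matricial norm $\|t_n\otimes id\|$ is weak-$*$ lower semicontinuous appropriately — equivalently that the predual pairing realizing the norm of the row passes to the limit. I would handle this by noting $M_{1,n}\check\otimes E \hookrightarrow M_{1,n}\check\otimes E^{**}$ is again a complete isometry and the norm of an element of $M_{1,n}\check\otimes E^{**}$ is the supremum of $|\langle\cdot, \psi\rangle|$ over $\psi$ in the (norm-dense-enough) subspace dual to the $E$-part, so the row norm is weak-$*$ lower semicontinuous in the $E$-variables; combined with weak-$*$ continuity of the column norm being an upper bound along a suitable net, a standard compactness argument closes the gap. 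Alternatively, one can bypass this entirely by invoking the known fact that for a finite-dimensional operator space $V$, $V \check\otimes E^{**} = (V \check\otimes E)^{**}$ completely isometrically, so that $\|t_n\otimes id_{E^{**}}\| = \|(t_n\otimes id_E)^{**}\| = \|t_n\otimes id_E\|$, and Lemma \ref{tensor} finishes both directions at once.
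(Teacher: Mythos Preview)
Your proposal is correct. The reverse direction is identical to the paper's. For the forward direction, the paper does precisely what you sketch in your final ``alternatively'' sentence: it establishes the equality
\[
\|t_n\otimes id_{E^{**}}\|_{M_{n,1}\check{\otimes}E^{**}\to M_{1,n}\check{\otimes}E^{**}}=\|t_n\otimes id_{E}\|_{M_{n,1}\check{\otimes}E\to M_{1,n}\check{\otimes}E}
\]
directly (via the duality between the operator space projective and injective tensor products together with $M_{n,1}^*=M_{1,n}$), and then invokes Lemma \ref{tensor}. Your primary Goldstine/weak-$*$ argument would also go through, but it is a more laborious route to the same identification $M_{n,1}\check\otimes E^{**}\cong (M_{n,1}\check\otimes E)^{**}$ that your alternative (and the paper) use in one stroke; the paper's duality argument is the cleaner of the two.
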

\begin{proof}
Note that if $E^{**}$ has CRP, then $E$ has CRP follows from the fact that the natural inclusion of $E$ inside $E^{**}$ is a complete isometry \cite[Proposition 3.2.1]{ER00}. To prove the converse, note by the duality between the projective and injective tensor product of operator spaces (see \cite[Theorem 4.1]{P03}) and the fact that $M_{n,1}^*=M_{1,n}$ completely isometrically (see \cite[Exercise 2.3.5]{P03}), we have $
\|t_n\otimes id_{E^{**}}\|_{M_{n,1}\check{\otimes}E^{**}\to M_{1,n}\check{\otimes}E^{**}}=\|t_n\otimes id_{E}\|_{M_{n,1}\check{\otimes}E\to M_{1,n}\check{\otimes}E}.$ The proof follows by using Lemma \ref{tensor}.
\end{proof}
\begin{thm}\label{staral} Let $A$ be a $C^*$-algebra. Then $A$ has CRP iff $A$ is subhomogeneous. Moreover, $A$ has CRP with constant $1$ iff $A$ is abelian.
\end{thm}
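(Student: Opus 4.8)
The plan is to establish the two biconditionals separately, in each case reducing to an elementary estimate about $M_n$ together with Lemma~\ref{tensor}, Lemma~\ref{double dual}, and the injectivity of the operator space injective tensor product. The estimate is
\[
\|t_n\otimes id_{M_n}\|_{M_{n,1}\check\otimes M_n\to M_{1,n}\check\otimes M_n}\geq\sqrt n,
\]
which is the case $p=\infty$ of Lemma~\ref{noncrp} and which I would in any case record directly: the column $c:=[E_{11},\dots,E_{1n}]^t\in M_{n,1}(M_n)$ has $\|c\|=\|\sum_j E_{j1}E_{1j}\|^{1/2}=\|I_n\|^{1/2}=1$, while its transposed row satisfies $\|t_n(c)\|=\|\sum_j E_{1j}E_{j1}\|^{1/2}=\|nE_{11}\|^{1/2}=\sqrt n$. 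I would also use throughout that CRP passes to completely isometric subspaces with no increase of the constant: if $F\subseteq E$ completely isometrically then, by injectivity of $\check\otimes$, the restriction of $t_n\otimes id_E$ to $M_{n,1}\check\otimes F$ is $t_n\otimes id_F$, so $\|t_n\otimes id_F\|\le\|t_n\otimes id_E\|$, and Lemma~\ref{tensor} applies.

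\textbf{First assertion.} For the ``if'' part, suppose $A$ is $n$-subhomogeneous. First I would pick a separating family $(\pi_\iota)_{\iota\in\mathcal I}$ of irreducible representations (say the GNS representations of all pure states) and pad each $\pi_\iota(A)\subseteq M_{k_\iota}$, $k_\iota\le n$, into the top-left corner of $M_n$, realizing $A$ as a $C^*$-subalgebra — hence a completely isometric subspace — of $\ell^\infty(\mathcal I,M_n)=M_n(\ell^\infty(\mathcal I))$. Since the transpose norm for $M_n(\ell^\infty(\mathcal I))$ is computed coordinatewise, it equals $\|t_k\|_{M_{k,1}(M_n)\to M_{1,k}(M_n)}$; and for a column $[x_1,\dots,x_k]^t$ with $x_i\in M_n$, cyclicity of the trace gives $\|\sum_i x_ix_i^*\|\le\mathrm{Tr}\sum_i x_ix_i^*=\mathrm{Tr}\sum_i x_i^*x_i\le n\|\sum_i x_i^*x_i\|$, so that norm is $\le\sqrt n$ for all $k$. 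Hence $M_n(\ell^\infty(\mathcal I))$, and therefore $A$, has CRP with constant $\le\sqrt n$. For the ``only if'' part, suppose $A$ is not subhomogeneous. Then $A^{**}$ is not subhomogeneous either, since $A\subseteq A^{**}$ is a $C^*$-subalgebra and subhomogeneity — being equivalent to the standard polynomial identity $s_{2n}$ — is inherited by $C^*$-subalgebras. By its type decomposition, a non-subhomogeneous von Neumann algebra contains, for every $n$, a copy of $M_n$ as a corner $C^*$-subalgebra (it has type $\mathrm I$ summands of unbounded rank, or a summand of type $\mathrm I_\infty$, $\mathrm{II}$, or $\mathrm{III}$, each of which does), and this inclusion is completely isometric. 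Combining with the displayed estimate and injectivity of $\check\otimes$ yields $\|t_n\otimes id_{A^{**}}\|\ge\|t_n\otimes id_{M_n}\|\ge\sqrt n$ for all $n$, so $A^{**}$ fails CRP by Lemma~\ref{tensor}, hence so does $A$ by Lemma~\ref{double dual}.

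\textbf{Constant $1$.} If $A=C_0(X)$ is abelian, then $M_{k,1}\check\otimes A=C_0(X,M_{k,1})$ and $M_{1,k}\check\otimes A=C_0(X,M_{1,k})$, and $t_k\otimes id_A$ acts coordinatewise as $t_k\colon M_{k,1}\to M_{1,k}$, which is isometric because a column and its transposed row have equal Euclidean norm; so $\|t_k\otimes id_A\|=1$ for all $k$. Conversely, if $A$ has CRP with constant $1$, then $A$ is subhomogeneous by the first assertion; were it non-abelian it would fail to be $1$-subhomogeneous, so — arguing exactly as above, with $A^{**}$ now a non-abelian von Neumann algebra, which contains $M_2$ as a corner — $M_2$ would embed completely isometrically into $A^{**}$, giving $\|t_2\otimes id_{A^{**}}\|\ge\|t_2\otimes id_{M_2}\|\ge\sqrt2$. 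Since Lemma~\ref{double dual} preserves this norm exactly, $\|t_2\otimes id_A\|\ge\sqrt2>1$, contradicting CRP with constant $1$ via Lemma~\ref{tensor}. Hence $A$ is abelian.

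The hard part — or rather the only step needing care — is the reduction to $A^{**}$ in the two forward directions: passing to the von Neumann algebra $A^{**}$ is what allows the failure of subhomogeneity to be turned, via the type decomposition, into genuine completely isometric copies of $M_n$, sidestepping any need for a completely isometric section of a Kadison-transitivity compression of $A$ itself; it uses only the elementary facts that subhomogeneity is equivalent to a polynomial identity and hence inherited by $C^*$-subalgebras. Everything else is bookkeeping.
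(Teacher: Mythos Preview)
Your proof is correct and, for the ``only if'' directions, follows essentially the same route as the paper: pass to the bidual via Lemma~\ref{double dual}, locate completely isometric copies of $M_n$ (respectively $M_2$) inside the von Neumann algebra $A^{**}$, and invoke the elementary column/row computation with the matrix units $E_{1j}$ --- this is precisely the paper's use of the block matrix $A_n$.

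Where you differ is in the forward directions. The paper simply cites \cite{Roy05} for ``subhomogeneous $\Rightarrow$ CRP'' and does not spell out the abelian case, whereas you give a self-contained elementary argument: embed an $n$-subhomogeneous $A$ completely isometrically into $M_n(\ell^\infty(\mathcal I))$ and use the trace inequality $\|\sum x_ix_i^*\|\le\operatorname{Tr}\sum x_ix_i^*=\operatorname{Tr}\sum x_i^*x_i\le n\|\sum x_i^*x_i\|$ to obtain the explicit CRP constant $\sqrt n$. This is a genuine (if small) improvement in exposition --- it avoids the external reference, yields the sharp constant, and shows that the two directions together pin down the optimal CRP constant of an $n$-subhomogeneous but not $(n-1)$-subhomogeneous $C^*$-algebra as exactly $\sqrt n$. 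Your justification that non-subhomogeneity passes from $A$ to $A^{**}$ via the Amitsur--Levitzki polynomial identity is also more explicit than the paper's one-line appeal to folklore.
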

\begin{proof}
One direction follows from \cite{Roy05}. Note that if $A$ has CRP, then by Lemma \ref{double dual} $A^{**}$ is a von Neumann algebra which has CRP. However, it is folklore that if $A^{**}$ is not subhomogeneous then it contains $M_n$ for all $n\geq 1.$ We get a contradiction by looking at $A_n$ and $B_n$ defined in Section 2.
\end{proof}
\begin{rem}One can also have the following alternative approach. Recall that the Haagerup tensor product norm on the algebraic tensor product  is defined by 
\[
\|z\|_h:=\inf\Big\{\Big\|\sum_{i=1}^n x_ix_i^{*}\Big\|^{\frac{1}{2}}\Big\|\sum_{i=1}^n y_i^*y_i\Big\|^{\frac{1}{2}}:z=\sum_{i=1}^n x_i\otimes y_i\Big\} \quad (z \in A\otimes A).
\] 
Let us consider $\text{inv}:A\otimes A\to A\otimes A$, defined on the elementary tensors by $\text{inv}(x\otimes y ):=x^*\otimes y^*.$ If $A$ has CRP it follows that for some $C>0$ and for all $z\in A\otimes A,$ $ \|\text{inv}(z)\|\leq C\|z\|.
$
The result follows from \cite[Part (iii) Theorem 2.2.]{ajay}. If $C=1,$ then $A$ is abelian follows from \cite[Theorem 2.3]{ajay}. 
\end{rem}
\begin{rem}\label{ohmrp} Operator spaces for which $\|id_E\|_{cb, E\to E^{op}}=1$, have been studied for $C^*$-algebras and operator algebras in \cite{Oka70} and \cite{Ble90}.
\end{rem}
The following list summarizes the results which establishes CRP and other related properties for some well known examples of operator spaces. Checking the properties are easy in the most of the cases and some of it maybe present in the literature.
{\begin{enumerate}
\item[$\bullet$] $MIN(E), MAX(E)$ and $L_p(\Omega)$ for $1\leq p\leq \infty,$ have CRP with constant $1$, where $MIN(E)$ and $MAX(E)$ denote the so called \textit{minimal and maximal quantization of a Banach space $E$}. For $MIN(E)$ and $MAX(E)$ this follows from their definitions. By duality and interpolation one sees that the result for $L_p(\Omega)$ follows from complex interpolation.  Moreover, $MIN(E)$ has CMP with constant $1.$ This is because $\text{MIN}(E)$ embeds completely isometrically into $C(K)$ for some compact Hausdroff topological space $K$ \cite[Proposition 3.3.1]{ER00} and for all $[f_{ij}]_{i,j=1}^n\in M_n(C(K)),$ we have $\sup_{s\in K}\|[f_{ij}(s)]\|_{op}\leq\sup_{s\in K}(\sum_{i,j=1}^n|f_{ij}(s)|^2)^{1/2}.$ 
\item[$\bullet$] The operator Hilbert space over the indexing set $I$, denoted by $OH(I)$, have CMP with constant $1$, which further implies that $S_2$ (the class of Hilbert-Schmidt operators on $\ell_2$) and more generally $L_2(\mathcal M)$ have this property as well with constant $1$, where $\mathcal M$ is a semifinite von Neumann algebra. Note that by the description of the norm of $OH(I)$ we have the formula \[\|[x_{ij}]_{i,j=1}^n\|_{M_n(OH(I))}=\|\big [[\langle x_{ij}, x_{kl}\rangle]_{k,l=1}^n \big ]_{i,j=1}^n\|^{\frac{1}{2}}_{M_{n^2}}.\] From Lemma \ref{s2norm}, we get $\|[a_1 \ldots a_n]^t\|_{M_{n,1}(OH(I))}=(\sum_{i,j=1}^n|\langle a_i, a_j\rangle|^2)^{\frac{1}{4}}$, and thus we get the trivial inequality 
\[
\|[x_{ij}]_{i,j=1}^n\|_{M_n(OH(I))}\leq (\sum_{i,j=1}^n\sum_{k,l=1}^n|\langle x_{ij},x_{kl}\rangle|^2)^{\frac{1}{2}}.
\]
\item[$\bullet$]The column Hilbert space $C$ has the column-row property with constant $1$. The row Hilbert space does not have the column-row property.
\end{enumerate}

\textit{Concluding remarks}:
In the literature, there are several important studies on properties which have a certain resemblance to CRP. An operator space is called \textit{symmetric} if it is completely isometric to its opposite \cite{Ble90} via the identity map. Okayusu \cite{Oka70} showed that a $C^*$-algebra is symmetric if and only if it is commutative. Thus Theorem \ref{staral}, can be considered as a generalization of the afore-mentioned result.  Using Okayusu's characterization, Tomiyama \cite{Tom83} studied the positivity of the transpose map on $C^*$-algebras. Furthermore, Blecher \cite{Ble90} extended Okayusu's result and showed that any unital operator algebra which is symmetric must be commutative. We refer the reader to \cite{Roy05} for some related results. Our characterizations on $L_p$-spaces with CRP can be observed as a contribution towards results belonging to the theme of the above direction. Moreover, estimates on the completely bounded norm of the transpose maps on non-commutative $L_p$-spaces were crucial in the recent work of Le Merdy and Zadeh for studying separating maps on non-commutative $L_p$-spaces (see \cite{LZ22} and \cite{LZ221}). 

\section*{Acknowledgement}
The authors are thankful to Michael Hartz for some useful communications throughout the work. They would also like to thank the anonymous referee for reading the manuscript intricately and giving important comments and  revisions which have improved the presentation of this article. The first and second author acknowledges the DST-INSPIRE Faculty Fellowship DST/INSPIRE/04/2020/001132 and DST/INSPIRE/04/2019/000769, respectively.

\end{document}